\theoremstyle{plain}%
 \newtheorem{theorem}{Theorem}
 \newtheorem{lemma}{Lemma}%
\theoremstyle{remark}
\theoremstyle{definition}
\begin{document}

\begin{center}
{\Large Reduced complexities for sequences over finite alphabets}

 \ 

{\textsc{John M. Campbell}, \textsc{James Currie}, and \textsc{Narad Rampersad}} 

 \ 

\end{center}

\begin{abstract}
 Letting $w$ denote a finite, nonempty word, let $\text{red}(w)$ denote the word obtained from $w$ by replacing every subword $s$ of 
 $w$ of the form $cc \cdots c$ for a given character $c$ (such that there is no character immediately to the left or right of $s$ equal to 
 $c$) with $c$. Complexity functions for infinite words play important roles within combinatorics on words, and this leads us to introduce 
 and investigate variants of the factor and abelian complexity functions using the given reduction operation. By enumerating words $v$ 
 and $w$ of a given length $n \geq 0$ and associated with an infinite sequence over a finite alphabet such that $\text{red}(v)$ and 
 $\text{red}(w)$ are equal or otherwise equivalent in some specified way, by analogy with the factor and abelian complexity functions, this 
 may be seen as producing simplified versions of previously introduced complexity functions. We prove a recursion for the reduced factor 
 complexity function $\rho_{\text{{\bf t}}}^{\text{red}}$ for the Thue--Morse sequence $\text{{\bf t}}$, giving us that 
 $(\rho_{\text{{\bf t}}}^{\text{red}}(n) : n \in \mathbb{N})$ is a $2$-regular sequence, we prove an explicit evaluation for the reduced factor 
 complexity function $\rho_{\text{{\bf f}}}^{\text{red}}$ for the (regular) paperfolding sequence $\text{{\bf f}}$, together with an evaluation 
 for the reduced abelian complexity function $\rho_{\text{{\bf f}}}^{\text{ab}, \text{red}}$ for $\text{{\bf f}}$. We conclude with open 
 problems concerning $\rho_{\text{{\bf t}}}^{\text{ab}, \text{red}}$. 
\end{abstract}

\noindent {\footnotesize \emph{MSC:} 68R15, 05A05}

\vspace{0.1in}

\noindent {\footnotesize \emph{Keywords:} factor complexity, automatic sequence, 
 integer sequence, subword, Thue--Morse sequence, paperfolding sequence, abelian complexity, 
 regular sequence, {\tt Walnut}}

\section{Introduction}
 For an infinite sequence $\text{{\bf x}}$ over a finite alphabet, the \emph{factor complexity function} 
\begin{equation}\label{factorx}
 \rho_{\text{{\bf x}}}\colon \mathbb{N}_{0} \to \mathbb{N} 
\end{equation}
 maps $n \geq 0$ to the number of distinct factors of $\text{{\bf x}}$ of length $n$. Similarly, the \emph{abelian complexity function}
\begin{equation}\label{abelianx}
 \rho^{\text{ab}}_{\text{{\bf x}}}\colon \mathbb{N}_{0} \to \mathbb{N} 
\end{equation}
 maps $n \geq 0$ to the number of distinct factors of $\text{{\bf x}}$ of length $n$, up to equivalence by rearrangements of characters 
 (possibly by an identity permutation) in words of the same length. To construct simplified versions of the complexity functions in 
 \eqref{factorx} and \eqref{abelianx} and of other previously introduced complexity functions, we introduce counting functions based 
 on factors that are reduced according to a function on nonempty, finite words defined below. 

 We henceforth adopt the convention whereby infinite sequences are denoted in boldface and whereby finite sequences may be referred to 
 as (finite) \emph{words} and are denoted without boldface. For a nonempty and finite word $w$, let $w$ be written as $$ w = c_1 
 \cdots c_1 c_2 \cdots c_2 \cdots c_n \cdots c_n $$ for some positive integer $n$, where $c_1$, $c_2$, $\ldots$, $c_n$ are characters such 
 that $c_{i} \neq c_{i+1}$ for all possible indices $i$. A maximal block of identical characters, $c_i \cdots c_i$ is called a \emph{run}.
 We then define the \emph{reduction}   $\text{red}(w)$ of $w$ so that $$ \text{red}(w) = 
 c_1 c_2 \cdots c_n. $$ For finite words $w$ and $v$, we say that $w$ and $v$ are \emph{reduced-equivalent},  writing $w 
 \sim_{\text{red}} v$, if $\text{red}(w) = \text{red}(v)$.  In the particular case when $w$ is over the binary alphabet $\{0,1\}$,
 we see that $\text{red}(w)$ must consist of alternating $0$'s and $1$'s and hence that the $\sim_{\text{red}}$ equivalence class of $w$
 is uniquely determined by the first letter of $w$ and the number of runs in $w$.  For example, if $w=0010110$ and $v=0111010$ then
 $\text{red}(w)=\text{red}(v)=01010$ and $w \sim_{\text{red}} v$.

 For an infinite sequence $\text{{\bf x}}$ over a finite alphabet,  we then define the \emph{reduced factor complexity function} 
\begin{equation}\label{redfactorx}
 \rho_{\text{{\bf x}}}^{\text{red}}\colon \mathbb{N}_{0} \to \mathbb{N} 
\end{equation}
 by analogy with \eqref{factorx} so that \eqref{redfactorx} maps   $n \geq 0$ to the number of distinct factors of $\text{{\bf x}}$ of  
  length $n$,   up to equivalence by $\sim_{\text{red}}$. Similarly,   we define the \emph{reduced abelian complexity function} 
\begin{equation}\label{abredx}
 \rho_{\text{{\bf x}}}^{\text{ab}, \text{red}}\colon \mathbb{N}_{0} \to \mathbb{N} 
\end{equation}
 by analogy with \eqref{abelianx} so that \eqref{abredx}
 maps $n \geq 0$ to the number of distinct factors of $\text{{\bf x}}$ of length $n$, 
 where two factors $v$ and $w$ are considered to be equivalent 
 if $\text{red}(v)$ is of the same length as $\text{red}(w)$
 and $\text{red}(v)$ can be obtained by rearranging the characters of $\text{red}(w)$ 
 (possibly by an identity permutation). 

   Informally, given a previously introduced complexity function $p$ defined on $\text{{\bf x}}$,    a \emph{reduced complexity function}  
 associated with $p$ may   be defined by counting length-$n$ factors of $\text{{\bf x}}$, up to an equivalence relation   such that factors  
 $v$ and $w$ of equal length are equivalent if $\text{red}(v)$ and $\text{red}(w)$   are equivalent according to how finite words are  
  enumerated in the definition of $p$.   The reduced abelian complexity function defined above provides a prototypical instance of this. 

 For an infinite sequence denoted as an infinite word, 
 let the index of the initial term be $1$. 
 For $n \in \mathbb{N}$, set $\text{{\bf t}}_{n}$ to be equal to the number of 
 $1$'s, modulo $2$, in the base-$2$ expansion 
 of $n-1$. This produces the famous \emph{Thue--Morse sequence}
\begin{equation}\label{numericaltm}
 \text{{\bf t}} = 011010011001011010010110011010011001011001101001011010\cdots 
\end{equation}
 with reference to the work of Allouche and Shallit on the ubiquitous nature of the sequence 
 in \eqref{numericaltm}~\cite{AlloucheShallit1999}. 
 For a positive integer $n$, we write $n = n' 2^k$ for an odd integer $n'$, 
 and we then set 
 $\text{{\bf f}}_{n} = 0$
 if $n' \equiv 1 \pmod{4}$ 
 and $\text{{\bf f}}_{n} = 1$ otherwise. 
 This allows us to define the \emph{ordinary} (or \emph{regular}) \emph{paperfolding sequence} 
\begin{equation}\label{numericalf}
 \text{{\bf f}} = 
 0010011000110110001001110011011000100110001101110010011\cdots
\end{equation}
 The sequences in \eqref{numericaltm} and \eqref{numericalf}
 may be seen as fundamentally important and 
 prototypical instances of automatic sequences, with reference to the standard monograph on automatic
 sequences~\cite{AlloucheShallit2003}. 
 Past research on the evaluation of the factor complexity function 
 $\rho_{\text{{\bf t}}}$~\cite{Avgustinovich1994,Brlek1989,deLucaVarricchio1989}  
 together with the work of Allouche~\cite{Allouche1992}
 on the evaluation of $ \rho_{\text{{\bf f}}}$ 
 and together with the work of Madill and Rampersad~\cite{MadillRampersad2013} 
 on the evaluation of 
 $\rho_{\text{{\bf f}}}^{\text{ab}}$ 
 motivate problems concerning the evaluation 
 of reduced complexity functions for both $\text{{\bf t}}$ and $\text{{\bf f}}$. 

As noted earlier, the reduced complexity function of an infinite sequence is closely
related to the structure of the runs in the sequence.  The sequence of run lengths
of both $\text{{\bf t}}$ and $\text{{\bf f}}$ have previously been studied.
For instance, Allouche, Allouche, and Shallit~\cite{AlloucheAlloucheShallit2006}
showed that the run length sequence of $\text{{\bf t}}$, given by the fixed point
of the map $1 \to 121, 2 \to 12221$, is not an automatic sequence.
The run length sequence of $\text{{\bf f}}$ has also been studied by
Bunder, Bates, and Arnold~\cite{BunderBatesArnold2024} as well as by Shallit~\cite{Shallit2025}.

\section{Main results}
 We begin by considering the reduced factor 
 complexity functions for $\text{{\bf t}}$ and $\text{{\bf f}}$, 
 and we then consider the reduced abelian complexity function for $\text{{\bf f}}$.
 We conclude in Section \ref{sectionConclusion} with open 
 problems concerning the reduced abelian complexity function for $\text{{\bf t}}$. 

\subsection{Reduced factor complexity functions}
 For convenience, we typically disregard the trivial case 
 whereby a complexity function may or may not count the unique word of length $0$, i.e., 
 the empty or null word. 
 The integer sequence 
\begin{equation}\label{numericalfactortm}
 \big( \rho_{\text{{\bf t}}}(n) : n \in \mathbb{N} \big) 
 = (2, 4, 6, 10, 12, 16, 20, 22, 24, 28, 32, 36, 40, 42, 44, \ldots) 
\end{equation}
 is indexed in the On-Line Encyclopedia of Integer Sequences~\cite{oeis} 
 as {\tt A005942} and satisfies the recursion 
 \[ \rho_{\text{{\bf t}}}(n) = \begin{cases} 
 \rho_{\text{{\bf t}}}\left( \frac{n}{2} \right) + 
 \rho_{\text{{\bf t}}}\left( \frac{n}{2} + 1 \right) & \text{if $n$ is even}, \\ 
 2 \rho_{\text{{\bf t}}}\left( \frac{n + 1}{2} \right) & \text{otherwise}, 
 \end{cases} 
 \] 
 for $n \geq 1$. In constrast to \eqref{numericalfactortm}, 
 we find that the integer sequence 
 $$ \big( \rho^{\text{red}}_{\text{{\bf t}}}(n) : n \in \mathbb{N} \big) = 
 (2, 4, 4, 6, 4, 6, 6, 6, 4, 6, 6, 8, 6, 8, 6, 6, 4, 6, 6, 8, 6, 8, 8, \ldots) $$ 
 is not currently included in the OEIS, and this suggests that the concept of ``reduced complexity'' functions 
 has not been considered previously, and this motivates the recurrence introduced below. 

\begin{theorem}\label{tm_red}
 For every positive integer $n$, we have that 
 \[ \rho^{\text{\emph{red}}}_{\text{{\bf t}}}(n) = \begin{cases} 
 \rho^{\text{\emph{red}}}_{\text{{\bf t}}}\left( \frac{n+1}{2} \right) & \text{if $n$ is odd}, \\ 
 \rho_{\text{{\bf t}}}^{\text{\emph{red}}}(m+1) + 2 & \text{if $n = 4m$ or $n = 4m+2$}. 
 \end{cases} 
 \] 
\end{theorem}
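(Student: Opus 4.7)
The plan is to recast $\rho^{\mathrm{red}}_{\mathbf{t}}$ in terms of the set of possible run counts of factors, exploit the substitutive structure of $\mathbf{t}$, and then close the argument by a joint induction.

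First I would reduce the problem. Because $\mathbf{t}$ is closed under bitwise complementation, its length-$n$ factors come in complementary pairs, so $\rho^{\mathrm{red}}_{\mathbf{t}}(n) = 2\,|R_n|$, where $R_n$ denotes the set of run counts $\mathrm{runs}(w)$ realized by length-$n$ factors $w$ of $\mathbf{t}$. Since $\mathbf{t}$ is overlap-free, every run in $\mathbf{t}$ has length $1$ or $2$, and hence $R_n \subseteq \{\lceil n/2 \rceil, \ldots, n\}$. Under this translation the target recurrence becomes $|R_n| = |R_{(n+1)/2}|$ for odd $n$ and $|R_n| = |R_{m+1}| + 1$ for $n \in \{4m, 4m+2\}$.

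Next I would use the Thue--Morse morphism $\mu(0)=01$, $\mu(1)=10$ together with the fixed-point identity $\mathbf{t}=\mu(\mathbf{t})$ to relate $R_n$ to $R$-values at roughly half the index. Each length-$n$ factor of $\mathbf{t}$ arises as $\mu(u)$ with its first and/or last letter possibly removed, where $u$ is a factor of $\mathbf{t}$ of length $\lceil n/2 \rceil$ or $\lceil n/2 \rceil + 1$, the parity of the starting position in $\mathbf{t}$ determining which case occurs. A short count gives the identity $\mathrm{runs}(\mu(u))=2|u|-\mathrm{runs}(u)+1$, and removing an extremal letter always reduces the run count by exactly $1$, since the first and last letters of any $\mu(u)$ occupy singleton runs. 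Combining these observations yields
\[
 R_{2k+1} = \{2k+2-r : r \in R_{k+1}\}, \qquad R_{2k} = \{2k+1-r : r \in R_k \cup R_{k+1}\}.
\]
The first identity immediately gives $|R_{2k+1}|=|R_{k+1}|$, which is the odd case of the theorem. The even case reduces to the single identity $|R_k \cup R_{k+1}| = |R_{\lfloor k/2 \rfloor + 1}| + 1$.

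I would prove this remaining identity by strong induction on $n$, simultaneously establishing that (i) each $R_n$ is an interval of consecutive integers $[a_n, b_n]$ and (ii) both sequences $(a_n)$ and $(b_n)$ are non-decreasing with $a_{n+1}-a_n,\, b_{n+1}-b_n \in \{0,1\}$. Monotonicity in (ii) forces $R_m \cup R_{m+1} = [a_m, b_{m+1}]$, which plugs back into the formula for $R_{2k}$ above; a short computation of endpoint minima and maxima, splitting on whether $a_{m+1}-a_m=0$ or $1$ (and likewise for $b$), then yields both parts of the inductive step and the target identity $|R_k \cup R_{k+1}| = |R_{\lfloor k/2 \rfloor + 1}| + 1$. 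The main obstacle is the bookkeeping for (ii): one must verify that the endpoint-differences stay in $\{0,1\}$ after each application of the two-branch recursion, which is delicate because the even branch involves the union $R_k \cup R_{k+1}$ rather than a single $R$-value. Once (ii) is maintained, the rest of the argument is routine arithmetic with $a_n$ and $b_n$, and an alternative finish is available by encoding ``$k \in R_n$'' as a $2$-automatic predicate and verifying the endpoint identities mechanically via {\tt Walnut}.
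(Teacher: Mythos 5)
Your proposal is correct, and its core ingredients are the same as the paper's: the reduction $\rho^{\text{red}}_{\mathbf t}(n)=2\,|R_n|$ via complement-closure, the unique parsing of long factors as $\mu(u)$ with end letters possibly deleted, and the alternation/run-count formula $\text{runs}(\mu(u))=2|u|+1-\text{runs}(u)$ (the paper's Lemma~1). Where you diverge is in how the recursion is closed. The paper works with the extremal values $m_n,M_n$ (min/max alternation counts) and proves four recursions for them (Lemma~\ref{tm_max_min}) by sliding a length-$n$ window one position and arguing that an extremal factor may be assumed to have a prescribed index parity; the interval property of the realized run counts is then obtained by the same sliding observation inside the proof of the theorem. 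You instead derive the cleaner set-level identities $R_{2k+1}=\{2k+2-r: r\in R_{k+1}\}$ and $R_{2k}=\{2k+1-r: r\in R_k\cup R_{k+1}\}$ (both of which check out against the parsing, since the first and last letters of $\mu(u)$ are always singleton runs), which makes the odd case immediate and avoids the paper's case analysis over index parities, at the price of having to control the union $R_k\cup R_{k+1}$. Your ``main obstacle'' --- maintaining that the endpoints $a_n,b_n$ are non-decreasing with increments in $\{0,1\}$ --- does in fact survive the two-branch recursion (e.g.\ $a_{2k+1}-a_{2k}=1$ and $a_{2k+2}-a_{2k+1}=1-(b_{k+2}-b_{k+1})$, etc.), but you could dispose of it in one line without any induction: every factor of $\mathbf t$ extends on the right to a longer factor, and adjoining or deleting one end letter changes the run count by $0$ or $1$, so $m_{n+1}-m_n,\,M_{n+1}-M_n\in\{0,1\}$ directly; this is exactly the sliding observation the paper uses, and it also gives interval-ness of each $R_n$ for free. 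Two small loose ends to tidy: the unique parsing holds only for factors of length at least $4$, so the cases $n\le 3$ of the theorem need a direct check; and the set equalities for $R_{2k}$ and $R_{2k+1}$ require the surjectivity of the parsing (every $u\in\mathcal F$ of the relevant length actually occurs as the preimage of some factor), which follows from $\mu(\mathbf t)=\mathbf t$ but should be said.
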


In order to prove this theorem, we recall that the Thue-Morse word ${\bf t}$ can also be defined
as a fixed point of the morphism $\mu$ that maps $0 \to 01$ and $1 \to 10$; i.e., we have
${\mathbf t}=\mu^\omega(0).$ Let ${\cal F}$ be the set of factors of ${\mathbf t}$. For a word $v$, we denote by $^-v$ (resp., $v^-$, $^-v^-$) the result of deleting the first letter (resp., last letter, first and last letters) of $v$.
If $w\in{\cal F}$ and $|w|\ge 4$ then the index $\iota(w)$ of $w$ in ${\mathbf t}$ is fixed modulo 2, and $w$ can be parsed uniquely as one of
$$w=\left\{
\begin{array}{lll}
\mu(u),&\text{ some }u\in{\cal F},\\
^-\mu(u)^-,&\text{ some }u\in{\cal F},\\
\mu(u)^-,&\text{ some }u\in{\cal F},\text{ or}\\
^-\mu(u),&\text{ some }u\in{\cal F}\\
\end{array}
\right.
$$
with these cases corresponding, in order, to the situations where $\langle |w|,\iota(w) \rangle$ modulo 2 is $\langle 0,0 \rangle,\langle 0,1 \rangle,\langle 1,0 \rangle$, or $ \langle1,1 \rangle$.

Call $01$ and $10$ {\em alternations}. By the number of alternations in a word $w$ we mean $w_{\{01,10\}}$, the total number of occurrences of $01$ and $10$ in $w$. 

\begin{lemma}
If $w\in\{0,1\}^*$ contains $\alpha$ alternations, then $\mu(w)$ contains $2|w|-1-\alpha$ alternations.
\end{lemma}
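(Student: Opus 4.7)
The plan is to exploit the fact that in any binary word of length $N$ the $N-1$ adjacent letter pairs are partitioned into alternations ($01$ or $10$) and repetitions ($00$ or $11$), so that counting alternations in $\mu(w)$ is equivalent to counting repetitions in $\mu(w)$ and subtracting from $2|w|-1$. The goal then reduces to establishing that the number of repetitions in $\mu(w)$ equals $\alpha$, the number of alternations in $w$.

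Write $w = w_1 w_2 \cdots w_n$ with $n = |w|$, so that $\mu(w) = \mu(w_1) \mu(w_2) \cdots \mu(w_n)$ has length $2n$. First I would observe that each block $\mu(w_i)$ occupies positions $2i-1, 2i$ in $\mu(w)$ and is equal to either $01$ or $10$; in particular every ``internal'' pair of adjacent letters inside a single block is an alternation, so any repetition in $\mu(w)$ must straddle a block boundary, i.e.\ occur at positions $2i, 2i+1$ for some $1 \le i \le n-1$. This localizes the problem entirely to the $n-1$ boundary positions.

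Next I would run the four-case analysis on the pair $(w_i, w_{i+1})$, using that the second letter of $\mu(c)$ is $\bar c$ and the first letter of $\mu(c)$ is $c$: the boundary pair at positions $2i, 2i+1$ equals $(\bar{w_i}, w_{i+1})$, which is a repetition precisely when $\bar{w_i} = w_{i+1}$, i.e.\ precisely when $w_i \ne w_{i+1}$, i.e.\ precisely when $w_i w_{i+1}$ is an alternation in $w$. Hence the map sending an alternation $w_i w_{i+1}$ of $w$ to the boundary repetition at positions $2i, 2i+1$ of $\mu(w)$ is a bijection between alternations in $w$ and repetitions in $\mu(w)$, giving $\alpha$ repetitions in $\mu(w)$ and therefore $2n-1-\alpha$ alternations.

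I do not expect any serious obstacle: the proof is essentially a one-line computation after the observation that repetitions in $\mu(w)$ must occur at block boundaries. The only conceptually noteworthy point, which should be stated explicitly, is the ``inversion'' phenomenon that an alternation in $w$ becomes a repetition in $\mu(w)$ at the corresponding boundary, and vice versa; this is what makes the count come out to $2|w|-1-\alpha$ rather than, say, $2\alpha$ or $\alpha + \text{const}$.
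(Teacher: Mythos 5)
Your proof is correct and follows essentially the same route as the paper's: both decompose the adjacent pairs of $\mu(w)$ into the $|w|$ block-internal pairs (always alternations) and the $|w|-1$ boundary pairs, and both hinge on the observation that the boundary pair is $(\overline{w_i},w_{i+1})$, which alternates exactly when $w_iw_{i+1}$ does not. Counting repetitions and subtracting from $2|w|-1$, as you do, is just the complementary bookkeeping of the paper's direct count of alternations.
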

\begin{proof} From the definition of $\mu$, each letter of $w$ maps to an alternation. For each of the $|w|-1$ length 2 factors $ab$ of $w$ there is an alternation in $\mu(w)$ formed from the last letter of $\mu(a)$ and the first letter of $\mu(b)$ exactly when $ab$ is not itself an alternation. Thus $\mu(w)$ has $|w|+(|w|-1-\alpha)=2|w|-1-\alpha$ alternations.
\end{proof}

For a non-negative integer $n$, let $m_n$ (resp., $M_n$) be the minimum (resp., maximum) number of alternations in any $w\in {\cal F}$ with $|w|=n$.

\begin{lemma}\label{tm_max_min} For $n\ge 2$ we have
\begin{align*}
 m_{2n} & = 2n-1-M_{n+1}, \\ 
 M_{2n} & = 2n-1-m_n, \\ 
 m_{2n+1} & = 2 n-M_{n+1}, \text{and} \\
 M_{2n+1} & = 2n-m_{n+1}.
\end{align*}
\end{lemma}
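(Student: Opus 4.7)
The plan is to combine the parsing dichotomy stated just above the lemma with the alternation-counting lemma just proved. For any $w\in{\cal F}$ with $|w|\geq 4$, the parsing result puts $w$ into exactly one of the four forms $\mu(u)$, ${}^-\mu(u)^-$, $\mu(u)^-$, ${}^-\mu(u)$, with $u\in{\cal F}$ whose length is determined by $|w|$ and with the applicable form dictated by the parity pair $\langle |w|,\iota(w)\rangle$. In each case I will use the preceding lemma to express the number of alternations in $w$ in terms of the alternations $\alpha$ of the associated $u$, and then minimize or maximize over both forms and over $u$ of the relevant length.

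For $|w|=2n$ with $n\geq 2$, the admissible forms are $w=\mu(u)$ with $|u|=n$ and $w={}^-\mu(u)^-$ with $|u|=n+1$. The preceding lemma gives $2n-1-\alpha$ alternations in the first case. In the second, $\mu(u)$ has $2(n+1)-1-\alpha$ alternations, and since the first two letters of $\mu(u)$ form the alternation $\mu(u[1])$ and the last two form $\mu(u[|u|])$, each endpoint deletion destroys exactly one alternation, again producing $2n-1-\alpha$. Ranging $\alpha$ over the alternation counts of length-$n$ and length-$(n+1)$ factors, respectively, yields $m_{2n}=2n-1-\max(M_n,M_{n+1})$ and $M_{2n}=2n-1-\min(m_n,m_{n+1})$. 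For $|w|=2n+1$, the two admissible forms are $\mu(u)^-$ and ${}^-\mu(u)$, both with $|u|=n+1$; each produces $2n-\alpha$ alternations by the same bookkeeping, giving immediately $m_{2n+1}=2n-M_{n+1}$ and $M_{2n+1}=2n-m_{n+1}$.

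To collapse the remaining $\max$ and $\min$ to the stated closed forms, I will verify the monotonicity estimates $m_n\leq m_{n+1}$ and $M_n\leq M_{n+1}$. The first follows because truncating by one letter a length-$(n+1)$ factor achieving $m_{n+1}$ produces a length-$n$ factor whose alternation count drops by $0$ or $1$. The second follows symmetrically: any length-$n$ factor attaining $M_n$ extends inside ${\mathbf t}$ to a length-$(n+1)$ factor gaining at most one alternation. Together these give $\max(M_n,M_{n+1})=M_{n+1}$ and $\min(m_n,m_{n+1})=m_n$, completing the four identities.

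The main obstacle is the careful alternation bookkeeping for the trimmed forms ${}^-\mu(u)$, $\mu(u)^-$, and ${}^-\mu(u)^-$; the key point is that the first and last two-letter blocks of $\mu(u)$ are themselves alternations, so every endpoint deletion reliably kills exactly one alternation. A minor companion check is that every parsed form actually occurs as a factor of ${\mathbf t}$, so that both parity cases contribute at each length; this is immediate, since $\mu(u)$ occurs at an even position in ${\mathbf t}=\mu({\mathbf t})$ and its endpoint trimmings occur at odd positions.
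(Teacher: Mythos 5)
Your proof is correct, and it handles the key optimization step by a route that differs from the paper's in a meaningful way. The paper fixes a length-$2n$ factor $w$ achieving the extremum and uses a \emph{sliding} argument (comparing $w$ with the factor one position to its right) to show that the extremum is already attained by a factor of the convenient index parity, hence of a single parse form (e.g.\ ${}^-\mu(u)^-$ with $|u|=n+1$ for $m_{2n}$); it then argues by contradiction that the corresponding $u$ must be extremal among factors of its length. You instead keep both parse forms in play, compute that each yields the same expression $2n-1-\alpha$ (resp.\ $2n-\alpha$ in the odd-length case) for the alternation count, arrive at the intermediate identities $m_{2n}=2n-1-\max(M_n,M_{n+1})$ and $M_{2n}=2n-1-\min(m_n,m_{n+1})$, and collapse them using the monotonicity $m_n\le m_{n+1}$ and $M_n\le M_{n+1}$, justified by one-letter truncation and extension. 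The two routes are close in spirit --- your monotonicity lemma and the paper's sliding step are both local one-letter perturbation arguments --- but yours cleanly separates the surjectivity of the parsing (every $\mu(u)$ and every trimmed image genuinely occurs as a factor, since ${\bf t}=\mu({\bf t})$) from the extremization, and it records the slightly more informative max/min identities along the way; the odd-length case needs no such collapse since both forms $\mu(u)^-$ and ${}^-\mu(u)$ use $|u|=n+1$. Your bookkeeping for the trimmed forms (each endpoint deletion of $\mu(u)$ destroys exactly one alternation, because the first and last two-letter blocks of $\mu(u)$ are images of single letters and hence alternations) is exactly what the paper also relies on, and it is valid here since $|u|\ge 2$ throughout for $n\ge 2$.
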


\begin{proof}
 We begin by showing that $m_{2 n}=2n-1-M_{n+1}$. Suppose that $n\ge 2$ and $w$ is a word of length $2n$ with the 
 fewest alternations. Suppose $w$ has even index $\iota$. Sliding one place to the right, consider the word $w'$ of length $2n$ and 
 odd index $\iota+1$. Comparing the count of alternations in $w'$ with that in $w$, we see that $w'$ omits the alternation formed by 
 the first two letters of $w$, and may contain at most one new alternation, as a suffix. Thus $w'$ can contain no more alternations than 
 $w$. Thus $w'$ must be a word of length $2n$ with the fewest alternations.

Write $w'=^-\mu(u)^-$ for some $u\in{\cal F}$ with $|u|=n+1$. Let $u$ contain $\alpha$ alternations. Thus $w'$ contains $2|u|-1-\alpha$ alternations. We claim that $u$ has the maximum number of alternations for a word of length $n+1$, so that $M_{n+1}=\alpha$. 

To get a contradiction, suppose that $u'\in{\cal F}$ with $|u'|=n+1$ contains $\beta$ alternations where $\beta>\alpha$. Then $\mu(u')$ contains $2|u'|-1-\beta=2n+1-\beta$ alternations. The word $^-\mu(u')^-$ omits the first and last alternations in $\mu(u')$, and thus contains
$2n-1-\beta<2n-1-\alpha$ alternations, i.e., fewer than $w'$. Since $|^-\mu(u')^-|=2n$, this contradicts the choice of $w'$.
We conclude that $M_{n+1}=\alpha$, which means that
$m_{2n}=2n-1-M_{n+1}$, as claimed. 

Next we show that $M_{2n}=2n-1-m_n$. Suppose that $n\ge 2$ and $w$ is a word of length $2n$ with the most alternations. Suppose $w$ has odd index $\iota$. Sliding one place to the right, consider the word $w'$ of length $2n$ and even index $\iota+1$. Comparing the count of alternations in $w'$ with that in $w$, we see that $w'$ may possibly have lost an alternation formed by the first two letters of $w$, but definitely contains a new alternation, as a suffix. Thus $w'$ contains at least as many alternations as $w$. Thus $w'$ must also be 
a word of length $2n$ with the most alternations.

Write $w'=\mu(u)$ for some $u\in{\cal F}$ with $|u|=n$. Let $u$ contain $\alpha$ alternations. Thus $w'$ contains $2|u|-1-\alpha=2n-1-\alpha $ alternations. We claim that $u$ has the minimum number of alternations for a word of length $n$, so that $m_n=\alpha$. 

 To get a contradiction, suppose that $u'\in{\cal F}$ with $|u'|=n$ contains $\beta$ alternations where $\beta<\alpha$. Then $\mu(u')$
 has length $2n$ and contains $2|u'|-1-\beta=2n-1-\beta>2n-1-\alpha$ alternations, i.e., more than $w'$. This contradicts the choice
 of $w'$. We conclude that $m_n=\alpha$, which means that $M_{2n}=2n-1-m_n$, as claimed. 

The third equality is proved analogously:
Suppose that $n\ge 2$ and $w$ is a word of length $2n+1$ with the fewest alternations. 
Write $w=\mu(u)^-$ or $w=^-\mu(u)$ for some $u\in{\cal F}$ with $|u|=n+1$. Let $u$ contain $\alpha$ alternations. Since $w$ is obtained by deleting a letter from one end of $\mu(u)$, it has one fewer alternation than $\mu(u)$. Thus $w$ contains $2|u|-1-\alpha-1=
2n-\alpha$ alternations. We claim that $u$ has the maximum number of alternations for a word of length $n+1$, so that $M_{n+1}=\alpha$. 

 To get a contradiction, suppose that $u'\in{\cal F}$ with $|u'|=n$ contains $\beta$ alternations where $\beta>\alpha$. Then
 $\mu(u')^-$ is a word of length $2n+1$ that contains $2|u'|-1-\beta-1=2n-\beta<2n-\alpha$ alternations. This contradicts the
 choice of $w'$. We thus conclude that $M_{n+1}=\alpha$, which means that
$m_{2n+1}=2n-M_{n+1}$, as claimed. 

 Finally, suppose that $n\ge 2$ and $w$ is a word of length $2n+1$ with the most alternations. 
Write $w=\mu(u)^-$ or $w=^-\mu(u)$ for some $u\in{\cal F}$ with $|u|=n+1$. Let $u$ contain $\alpha$ alternations. Since $w$ is obtained by deleting a letter from one end of $\mu(u)$, it has one fewer alternation than $\mu(u)$. Thus $w$ contains $2|u|-1-\alpha-1=
2n-\alpha$ alternations. We claim that $u$ has the minimum number of alternations for a word of length $n+1$, so that $m_{n+1}=\alpha$. 

 To get a contradiction, suppose that $u'\in{\cal F}$ with $|u'|=n$ contains $\beta$ alternations where $\beta<\alpha$. Then
 $\mu(u')^-$ is a word of length $2n+1$ that contains $2|u'|-1-\beta-1=2n-\beta>2n-\alpha$ alternations. This contradicts 
 the choice of $w'$.
 Consequently, we may 
 conclude that $m_{n+1}=\alpha$, which means that
$M_{2n+1}=2n-m_{n+1}$, as claimed. 
\end{proof}

\begin{lemma}\label{tm_mod4} 
 For $n\ge 1$ we have
\begin{align*}
 m_{4n} & = 2n-1+m_{n+1}, \\
M_{4n} & = 2n+M_{n+1}, \\
 m_{4n+2} & = 2 n+m_{n+1}, \text{and} \\
M_{4n+2}&=2n+1+M_{n+1}. 
\end{align*}
\end{lemma}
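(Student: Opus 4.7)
The plan is to deduce each of the four identities of Lemma \ref{tm_mod4} directly from Lemma \ref{tm_max_min} by a double application. Since $4n = 2(2n)$ and $4n+2 = 2(2n+1)$, each left-hand side matches one of the four patterns $m_{2N}$, $M_{2N}$, $m_{2N+1}$, $M_{2N+1}$ covered by Lemma \ref{tm_max_min}, with $N = 2n$ or $N = 2n+1$. Applying the appropriate identity from Lemma \ref{tm_max_min} once produces an expression whose inner quantity is an $m$ or $M$ at an intermediate index of the form $2n$, $2n+1$, or $2(n+1)$. A second, suitably chosen application of Lemma \ref{tm_max_min} at that intermediate index reduces the inner quantity to $m_{n+1}$ or $M_{n+1}$, and elementary arithmetic yields the claimed formula.

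Concretely, for $m_{4n}$ I would write $m_{4n} = 4n-1-M_{2n+1}$ using the first identity of Lemma \ref{tm_max_min} with $n$ replaced by $2n$, and then substitute $M_{2n+1} = 2n-m_{n+1}$ from the fourth identity to obtain $2n-1+m_{n+1}$. For $M_{4n}$ I would begin with $M_{4n} = 4n-1-m_{2n}$ via the second identity at $2n$, and then expand $m_{2n} = 2n-1-M_{n+1}$ by the first identity to get $2n+M_{n+1}$. For $m_{4n+2}$ I would use the first identity with $n$ replaced by $2n+1$ to write $m_{4n+2} = 4n+1-M_{2n+2}$, and then the second identity (applied at $n+1$) in the form $M_{2(n+1)} = 2(n+1)-1-m_{n+1}$ to arrive at $2n+m_{n+1}$. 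Finally, for $M_{4n+2}$ I would start from $M_{4n+2} = 4n+1-m_{2n+1}$ via the second identity at $2n+1$, and then substitute $m_{2n+1} = 2n-M_{n+1}$ from the third identity to obtain $2n+1+M_{n+1}$.

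There is no genuine combinatorial obstacle in this argument beyond bookkeeping: one must be careful to pair the correct outer identity with the correct inner identity so that the intermediate index produced by the first step falls within the scope of the second step, and so that the innermost quantity that survives is indexed by $n+1$ rather than $n$, as demanded by the conclusion. Any small values of $n$ for which an intermediate index drops below the hypothesis $N \geq 2$ of Lemma \ref{tm_max_min} can be handled separately by direct computation of $m_k$ and $M_k$ for the few relevant small $k$, which is a finite check.
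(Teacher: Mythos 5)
Your proposal is correct and matches the paper's proof essentially line for line: each identity is obtained by one application of Lemma~\ref{tm_max_min} at index $2n$ or $2n+1$ followed by a second application to eliminate the intermediate $m$ or $M$, with exactly the pairings you chose. Your closing remark about checking small $n$ where an inner application falls outside the hypothesis $n\ge 2$ is a reasonable extra precaution that the paper itself does not spell out.
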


\begin{proof}
To get each identity, we apply Lemma~\ref{tm_max_min} twice. We have
\begin{align*}
m_{4n} &= 4n-1-M_{2n+1}\\
&= 4n-1-(2n-m_{n+1})\\
&= 2n-1+m_{n+1},
\end{align*}
\begin{align*}
M_{4n} &= 4n-1-m_{2n}\\
&= 4n-1-(2n-1-M_{n+1})\\
&= 2n+M_{n+1},
\end{align*}
\begin{align*}
m_{4n+2} &= 4n+2-1-M_{2n+2}\\
&= 4n+1-(2n+2-1-m_{n+1})\\
&= 2n+m_{n+1},
\end{align*}
and
\begin{align*}
M_{4n+2} &= 4n+2-1-m_{2n+1}\\
&= 4n+1-(2n-M_{n+1})\\
&= 2n+1+M_{n+1}.
\end{align*}
\end{proof}

We can now give the proof of Theorem~\ref{tm_red}.

\begin{proof}[(Proof of Theorem~\ref{tm_red})]
Recall that the $\sim_{\text{red}}$ equivalence class of a binary word $w$ is
determined by the first letter of $w$ and the numbers of runs in $w$.
Moreover, the number of runs in $w$ is one greater than the number of
alternations in $w$. Now let $w$ be a factor of length $n$ of ${\bf t}$ with
$i$ runs. Then $i \in [m_n+1, M_n+1]$. Furthermore, for every $i$ in this interval,
there is such a $w$ in ${\bf t}$ (sliding $w$ to the left or right in ${\bf t}$ can
only increase or decrease the number of runs by at most $1$). Since ${\bf t}$ is closed under complement,
we see that
\[
\rho_{\text{{\bf t}}}^{\text{red}}(n) = 2(M_n+1-m_n-1+1) = 2(M_n-m_n+1).
\]
We now consider different cases for $n$ and apply Lemmas~\ref{tm_max_min} and~\ref{tm_mod4}.

If $n$ is odd, say $n=2m+1$, we have
\begin{align*}
\rho_{\text{{\bf t}}}^{\text{red}}(n) &= 2(M_{2m+1}-m_{2m+1}+1)\\
&= 2(2m-m_{m+1}-2m+M_{m+1}+1)\\
&= 2(M_{m+1}-m_{m+1}+1)\\
&= \rho_{\text{{\bf t}}}^{\text{red}}(m+1)\\
&= \rho_{\text{{\bf t}}}^{\text{red}}\left(\frac{n+1}{2}\right).
\end{align*}

If $n=4m$, we have
\begin{align*}
\rho_{\text{{\bf t}}}^{\text{red}}(n) &= 2(M_{4m}-m_{4m}+1)\\
&= 2(2m+M_{m+1}-2m+1-m_{m+1}+1)\\
&= 2(M_{m+1}-m_{m+1}+1)+2\\
&= \rho_{\text{{\bf t}}}^{\text{red}}(m+1)+2.
\end{align*}

If $n=4m+2$, we have
\begin{align*}
\rho_{\text{{\bf t}}}^{\text{red}}(n) &= 2(M_{4m+2}-m_{4m+2}+1)\\
&= 2(2m+1+M_{m+1}-2m-m_{m+1}+1)\\
&= 2(M_{m+1}-m_{m+1}+1)+2\\
&= \rho_{\text{{\bf t}}}^{\text{red}}(m+1)+2.
\end{align*}
\end{proof}

 From Theorem \ref{tm_red}, we thus have that 
 the integer sequence
 $( \rho^{\text{red}}_{\text{{\bf t}}}(n) : n \in \mathbb{N} )$ is a $2$-regular sequence, 
 referring to Allouche and Shallit's text~\cite[\S16]{AlloucheShallit2003} for background on $k$-regular sequences. 

 The integer sequence 
\begin{equation*}
 \big( \rho_{\text{{\bf f}}}(n) : n \in \mathbb{N} \big) 
 = ( 2, 4, 8, 12, 18, 23, 28, 32, 36, 40, 44, 48, 52, 56, 60, \ldots) 
\end{equation*}
 is indexed in the OEIS as {\tt A337120} and is such that 
\begin{equation}\label{Allouche4n}
 \rho_{\text{{\bf f}}}(n) = 4 n 
\end{equation}
 for all integers $n$ greater than $6$, as proved in 1992 by Allouche~\cite{Allouche1992}. The integer sequence 
 \begin{equation*}
 \big( \rho_{\text{{\bf f}}}^{\text{red}}(n) : n \in \mathbb{N} \big) 
 = ( 2, 4, 6, 4, 6, 4, 6, 4, 4, 4, 6, 4, 6, 4, 6, 4, 4, 4, 6, 4, 6, 4, 6, \ldots) 
\end{equation*}
 is not currently given in the OEIS, further suggesting that our notion of reduced complexity functions is original. 
 In contrast to \eqref{Allouche4n}, we find that 
 $\rho_{\text{{\bf f}}}^{\text{red}}$ is eventually periodic, as below. 

\begin{theorem}\label{pf_red}
 For every positive integer $n$, we have that 
 \[ \rho^{\text{\emph{red}}}_{\text{{\bf f}}}(n) = \begin{cases} 
 4 & \text{if $n \equiv \{ 0, 1, 2, 4, 6 \} \pmod{8}$}, \\ 
 6 & \text{if $n \equiv \{ 3, 5, 7 \} \pmod{8}$}. 
 \end{cases} 
 \] 
\end{theorem}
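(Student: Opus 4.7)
The plan is to parallel the argument used in the proof of Theorem~\ref{tm_red}. Since the $\sim_{\text{red}}$ class of a binary word is determined by its first letter and its number of runs, and since the set of factors of $\mathbf{f}$ is closed under complementation, each achievable ``number of runs'' in a length-$n$ factor of $\mathbf{f}$ contributes exactly $2$ to $\rho_{\mathbf{f}}^{\text{red}}(n)$. I would then introduce $m_n$ and $M_n$ for the minimum and maximum number of runs in any length-$n$ factor of $\mathbf{f}$, and reuse the sliding-window argument to conclude that every integer $r \in [m_n, M_n]$ is realized. This reduces the theorem to showing that $M_n - m_n = 1$ when $n \equiv 0, 1, 2, 4, 6 \pmod{8}$ and $M_n - m_n = 2$ when $n \equiv 3, 5, 7 \pmod{8}$ (for $n \ge 2$; the case $n = 1$ is checked directly).

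To establish these identities for $M_n - m_n$, I would exploit the recursive structure of $\mathbf{f}$ captured by $\mathbf{f}_{2k} = \mathbf{f}_k$ together with $\mathbf{f}_{4k+1} = 0$ and $\mathbf{f}_{4k+3} = 1$. Under this decomposition, the odd-indexed positions of $\mathbf{f}$ form the deterministic alternating sequence $0101\cdots$, while the even-indexed positions form another copy of $\mathbf{f}$. By writing a length-$n$ window of $\mathbf{f}$ in terms of its odd-indexed and even-indexed subwords, and tracking how alternations inside the window correspond to alternations in the halved subword, one obtains recursions for $m_n$ and $M_n$ in terms of smaller values, in the spirit of Lemmas~\ref{tm_max_min} and~\ref{tm_mod4} but now with case splits modulo $4$ at each step. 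Iterating these recursions twice reaches the mod-$8$ classification, and the stated periodic formula follows by induction on $n$.

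The main obstacle will be the case analysis for boundary effects, namely tracking how truncating the window at either endpoint alters the run count depending on the parity of the starting position and the length. Because the paperfolding decomposition is governed by residues modulo $4$ at each level (rather than only modulo $2$ as in the uniform Thue--Morse case), two levels of recursion are needed to reach the mod-$8$ pattern of Theorem~\ref{pf_red}, and the bookkeeping must distinguish whether a boundary position is odd or even and whether it cuts a run of length $1$, $2$, or $3$. As an independent sanity check, since $\mathbf{f}$ is $2$-automatic and the property ``some length-$n$ factor of $\mathbf{f}$ has exactly $r$ runs'' is first-order expressible in the appropriate theory, the {\tt Walnut} theorem prover referenced in the paper's keywords can mechanically verify the formula.
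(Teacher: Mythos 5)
Your reduction to $M_n-m_n$ rests on the claim that the factor set of $\mathbf{f}$ is closed under complementation, so that each realizable run count contributes exactly $2$ to $\rho^{\text{red}}_{\mathbf{f}}(n)$. That claim is false, and the resulting formula $\rho^{\text{red}}_{\mathbf{f}}(n)=2(M_n-m_n+1)$ gives the wrong answer in one of your cases. Concretely, take $n=8m+1$ with $m\ge 1$. The length-$n$ factors of $\mathbf{f}$ realize run counts $4m$, $4m+1$, and $4m+2$, so $M_n-m_n=2$ and your formula would yield $6$; the correct value is $4$, because the run count $4m$ is achieved only by factors beginning with $0$ and the run count $4m+2$ only by factors beginning with $1$ (while $4m+1$ is achieved with both first letters), giving $1+2+1=4$ classes. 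In particular, a length-$(8m+1)$ factor with $4m$ runs starting with $0$ exists but its complement is not a factor of $\mathbf{f}$. So the Thue--Morse template transfers only partially: the intermediate-value (sliding-window) argument does show every run count in $[m_n,M_n]$ occurs, but the step ``each such run count occurs with both first letters,'' which for $\mathbf{t}$ comes from closure under complement, is precisely what fails for $\mathbf{f}$.

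The paper's actual mechanism is also structurally different from your proposed $m_n,M_n$ recursions. It uses the Toeplitz interleaving directly: a factor beginning at an odd position has the form $a?b?a?b?\cdots$ with the non-gap letters alternating deterministically, so its run count is completely determined by its length regardless of the gap values; a general factor differs from such a word by at most the two boundary letters $x,y$, so its run count is $4m+[x\ne a]+[y\ne b]$ up to the relevant offset. The whole theorem then reduces to deciding, via congruence conditions on the starting position modulo $8$ and $16$ (and {\tt Walnut} for the residues $3,5,7 \bmod 8$), exactly which pairs (first letter, run count) occur. Your recursion scheme could perhaps recover $m_n$ and $M_n$, but it cannot by itself yield $\rho^{\text{red}}_{\mathbf{f}}(n)$; any repair must track first letters jointly with run counts, as the paper does.
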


 The proof is an application of several lemmas given below that treat different cases for the value of $n$ modulo $8$. We also make 
 use of the following alternative construction of the paperfolding sequence, which is known as the \emph{Toeplitz construction}:
\begin{itemize}
\item
 Start with an infinite sequence of \emph{gaps}, denoted by ?.
 \[\begin{array}{*{16}{c}}
 ? & ? & ? & ? & ? & ? & ? & ? & ? & ? & ? & ? & ? & ? & ? & \cdots
 \end{array}\]
\item
 Fill every other gap with alternating $0$'s and $1$'s.
 \[\begin{array}{*{16}{c}}
 0 & ? & 1 & ? & 0 & ? & 1 & ? & 0 & ? & 1 & ? & 0 & ? & 1 & \cdots
 \end{array}\]
\item
 Repeat.
 \[\begin{array}{*{16}{c}}
 0 & 0 & 1 & ? & 0 & 1 & 1 & ? & 0 & 0 & 1 & ? & 0 & 1 & 1 & \cdots
 \end{array}\]
 \[\begin{array}{*{16}{c}}
 0 & 0 & 1 & 0 & 0 & 1 & 1 & ? & 0 & 0 & 1 & 1 & 0 & 1 & 1 & \cdots
 \end{array}\]
 \[\begin{array}{*{16}{c}}
 0 & 0 & 1 & 0 & 0 & 1 & 1 & 0 & 0 & 0 & 1 & 1 & 0 & 1 & 1 & \cdots
 \end{array}\]
\end{itemize}
In the limit, one obtains the paperfolding word ${\bf f}$.

\begin{lemma}\label{odd_len}
 Let $a,b\in\{0,1\}$, $a \neq b$, and let $w$ be a length-$(2n+1)$ factor of ${\bf f}$ of the form
 \[w=a?b?a?b?\cdots a?b \text{ or } w=a?b?a?b?\cdots b?a.\]
 Then $w$ has $n+1$ runs.
\end{lemma}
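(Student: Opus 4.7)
The plan is to count alternations directly and invoke the standard identity that the number of runs in a binary word equals one plus the number of positions at which consecutive letters differ; so it suffices to show that $w$ contains exactly $n$ alternations.

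First I would note that in both stated forms the $n+1$ known letters occupy the odd positions $1, 3, \ldots, 2n+1$ of $w$ and alternate strictly between $a$ and $b$, while the $n$ gap letters (whose values are supplied by ${\bf f}$) occupy the even positions $2, 4, \ldots, 2n$. The $2n$ pairs of consecutive positions in $w$ partition naturally into $n$ disjoint pairs of pairs, one per triple $(2k-1, 2k, 2k+1)$ for $k = 1, \ldots, n$; each triple contributes exactly two consecutive-position pairs, namely $(2k-1, 2k)$ and $(2k, 2k+1)$, and these are disjoint across different values of $k$.

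The main step is a short case analysis within a single such triple. Writing it as $x\, g\, y$ with $\{x,y\} = \{a,b\}$ and $g \in \{a,b\}$, the triple reads $xxy$ when $g = x$ and $xyy$ when $g = y$; in either case, exactly one of its two consecutive-position pairs is an alternation, regardless of which value the gap carries. Summing over the $n$ disjoint triples yields exactly $n$ alternations in $w$, and hence $n+1$ runs. The argument is insensitive to whether $w$ ends in $b$ or in $a$, since it uses only the alternation pattern of the known letters, so both stated forms are handled simultaneously. I do not anticipate any real obstacle beyond the bookkeeping needed to verify that the partition into triples is disjoint and exhaustive.
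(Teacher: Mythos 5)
Your proposal is correct and follows the same route as the paper: both reduce the claim to showing that $w$ contains exactly $n$ alternations (occurrences of $ab$ or $ba$) and then use the fact that the number of runs exceeds the number of alternations by one. The paper simply asserts that the count of $n$ alternations is clear, whereas you justify it via the partition into triples $(2k-1,2k,2k+1)$, each contributing exactly one alternation regardless of the gap letter; this is a fine way to make the ``clear'' step explicit.
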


\begin{proof}
 The number of runs in $w$ is one more than the number of occurrences of $ab$ and $ba$ in $w$.
 It is clear that there are $n$ such occurrences.
\end{proof}

\begin{lemma}\label{f_2n}
 For every positive integer $n$, we have that $\rho^{\text{\emph{red}}}_{\text{{\bf f}}}(2n) = 4.$
\end{lemma}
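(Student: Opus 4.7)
The plan is to reduce the claim to two facts: every length-$2n$ factor of $\mathbf{f}$ has either $n$ or $n+1$ runs, and all four (first letter, number of runs) combinations are realized. Since the $\sim_{\text{red}}$-equivalence class of a binary word is determined precisely by its first letter and its number of runs, these two facts give $\rho^{\text{red}}_{\mathbf{f}}(2n) = 4$.

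First I would use the Toeplitz construction to analyze a length-$2n$ factor $w$ of $\mathbf{f}$ starting at some position $p$. If $p$ is odd, the first Toeplitz layer forces the entries $w_1, w_3, \ldots, w_{2n-1}$ to alternate between the two binary letters, while $w_2, w_4, \ldots, w_{2n}$ are unconstrained at that layer; the even-$p$ case is symmetric, with odd and even indices of $w$ interchanged. Writing the alternating entries as $a_1, \ldots, a_n$ (with $a_i \neq a_{i+1}$) and the remaining ``free'' entries as $b_1, \ldots, b_n$, I would pair up adjacent positions of $w$: for each interior $b_i$, the two adjacent pairs containing it together contribute exactly one alternation, regardless of the value of $b_i$, since ``$b_i = a_i$'' and ``$b_i = a_{i+1}$'' are mutually exclusive and exhaustive. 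This accounts for exactly $n-1$ forced interior alternations, while the single remaining boundary pair contributes one further alternation precisely when its free letter differs from its neighbouring alternating letter. Hence the number of runs of $w$ lies in $\{n, n+1\}$.

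Finally I would verify that all four combinations of (first letter, number of runs) occur. For this I would restrict to the even-$p$ case, in which the first letter of $w$ is simply $\mathbf{f}_p$ and the run count is $n$ or $n+1$ according as $\mathbf{f}_p = \mathbf{f}_{p+1}$ or $\mathbf{f}_p \neq \mathbf{f}_{p+1}$. Inspecting the first few letters of $\mathbf{f}$, the four starting positions $p = 4, 6, 2, 12$ yield the pairs $(0, n), (1, n), (0, n+1), (1, n+1)$ for every $n \geq 1$, since in each case the first two letters of the factor do not depend on $n$. The main obstacle I anticipate is the parity bookkeeping in the alternation count: one must carefully pair positions and treat the odd-$p$ and even-$p$ cases symmetrically, and in particular recognize that the ``boundary pair'' of the pairing sits at opposite ends of $w$ in the two cases. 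Once the setup is in place, the verification of the four combinations reduces to inspecting a handful of initial values of $\mathbf{f}$.
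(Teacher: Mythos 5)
Your proof is correct and follows essentially the same route as the paper: both use the Toeplitz structure of $\mathbf{f}$ to show that every length-$2n$ factor has $n$ or $n+1$ runs (so the first letter and run count give at most four $\sim_{\text{red}}$ classes) and then verify that all four combinations occur. The only difference is in that last verification, where the paper argues via congruence conditions on the position of the free letter modulo $8$ and $16$, while you exhibit four fixed even starting positions $p=4,6,2,12$ whose first two letters settle the matter uniformly in $n$ --- an equivalent and slightly more concrete check.
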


\begin{proof}
 Let $w$ be a factor of ${\bf f}$ of length $2n$. Then $w$ has one of the forms
 \[
 \begin{array}{ll}
 w=a?b?a?b?\cdots a?bx, & w=a?b?a?b?\cdots b?ax,\\
 w=xa?b?a?b?\cdots a?b, & w=xa?b?a?b?\cdots b?a,
 \end{array}
 \]
 for some $a,b,x \in \{0,1\}$ with $a \neq b$.

 Suppose $w=a?b?a?b?\cdots a?bx$. By Lemma~\ref{odd_len}, the word $w$ has $n$ runs if $x=b$
 and $n+1$ runs if $x=a$. We need to show that all four choices for $a,x \in \{0,1\}$ occur.
 Let $i$ be the position of the $x$ in some occurrence of $w$ in ${\bf p}$.
 
 If $a=0$ then $i \equiv 0 \pmod{4}$. Choose $i \equiv 4 \pmod{16}$ to get $x=0$ and choose
 $i \equiv 12 \pmod{16}$ to get $x=1$.

 If $a=1$ then $i \equiv 2 \pmod{4}$. Choose $i \equiv 2 \pmod{8}$ to get $x=0$ and choose
 $i \equiv 6 \pmod{8}$ to get $x=1$.

 Hence $w$ has four possible $\sim_{\text{red}}$ equivalence classes and all occur in ${\bf p}$.
 A similar analysis applies to the other cases for $w$ and yields four equivalence classes in
 every case.
\end{proof}

\begin{lemma}\label{f_1mod8}
 For every positive integer $n$, we have that $\rho^{\text{\emph{red}}}_{\text{{\bf f}}}(8n+1) = 4.$
\end{lemma}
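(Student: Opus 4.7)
My plan is to classify length-$(8n+1)$ factors $w$ of $\text{{\bf f}}$ by the parity of their starting index $i$ in $\text{{\bf f}}$ and to enumerate the (first letter, number of runs) pairs that arise, since these pairs parameterize the $\sim_{\text{red}}$-equivalence classes of binary words.

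If $i$ is odd, then $w = a?b?a?\cdots b?a$ fits Lemma~\ref{odd_len} with $a = \text{{\bf f}}_i$ and $b = \overline{a}$ drawn from the alternating odd-position values of $\text{{\bf f}}$, giving exactly $4n+1$ runs. Since $\text{{\bf f}}_i$ is $0$ for $i \equiv 1 \pmod 4$ and $1$ for $i \equiv 3 \pmod 4$, this case contributes the two classes $(0, 4n+1)$ and $(1, 4n+1)$.

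If $i$ is even, then $w = x_1 b_1 x_2 b_2 \cdots x_{4n} b_{4n} x_{4n+1}$, with the alternating $b_j = \text{{\bf f}}_{i+2j-1}$ (determined by $i \bmod 4$) and the $x_j = \text{{\bf f}}_{i+2(j-1)}$ at even positions. The interior $b_1 x_2 \cdots x_{4n} b_{4n}$, of length $8n-1$, fits Lemma~\ref{odd_len} and has $4n$ runs, so $w$ has $4n + [x_1 \neq b_1] + [x_{4n+1} \neq b_{4n}]$ runs with first letter $x_1$. A case analysis on $i \bmod 8$---using that $\text{{\bf f}}_i$ is fixed at residues $2$ and $6 \pmod 8$, that $\text{{\bf f}}_i$ is determined by $i \bmod 16$ at residue $4 \pmod 8$, and that $\text{{\bf f}}_{8k} = \text{{\bf f}}_k$ (a consequence of $\text{{\bf f}}_{2k} = \text{{\bf f}}_k$) handles residue $0$---shows that every realized class lies in $\{(0, 4n), (0, 4n+1), (1, 4n+1), (1, 4n+2)\}$.

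It remains to show that each of these four classes is realized. The classes $(0, 4n+1)$ and $(1, 4n+1)$ come from the odd-$i$ case. For $n$ odd, the classes $(0, 4n)$ and $(1, 4n+2)$ appear by choosing $i \equiv 4 \pmod{16}$ and $i \equiv 12 \pmod{16}$, respectively. The main obstacle is the case $n$ even, where those two classes can only come from $i \equiv 0 \pmod 8$ and require both pairs $(\text{{\bf f}}_k, \text{{\bf f}}_{k+n}) = (0,1)$ and $(\text{{\bf f}}_k, \text{{\bf f}}_{k+n}) = (1,0)$ to occur for some $k$. I would handle this by induction on the $2$-adic valuation of $n$: the identity $\text{{\bf f}}_{2k} = \text{{\bf f}}_k$ lets me replace $(k, n)$ by $(k/2, n/2)$ when $n$ is even, reducing to $n$ odd, for which explicit witnesses can be produced via the $(\bmod 4)$-rule at odd indices combined with the non-periodicity of $\text{{\bf f}}$. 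Assembling the cases, exactly four classes are realized, so $\rho^{\text{red}}_{\text{{\bf f}}}(8n+1) = 4$.
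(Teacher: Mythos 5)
Your proposal is correct and follows essentially the same route as the paper: classify factors by the parity of the starting index via the Toeplitz structure, use Lemma~\ref{odd_len} to fix the interior at $4n$ runs, note that $i \equiv 2 \pmod 4$ forces the two boundary letters to agree (so only the four classes $(0,4n)$, $(0,4n+1)$, $(1,4n+1)$, $(1,4n+2)$ can occur), and then exhibit starting positions realizing the two extreme classes. The only difference is in packaging: where you induct on the $2$-adic valuation of $n$ using $\text{{\bf f}}_{2k}=\text{{\bf f}}_{k}$ and leave the odd-$n$ base case as a sketch, the paper writes $8n=(2n'+1)2^{k}$ and takes $i=2^{k-1}$ and $i=3\cdot 2^{k-1}$ directly, which produces the witnesses for all $n$ in one stroke.
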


\begin{proof}
 Let $w$ be a factor of ${\bf p}$ of length $8n+1$. Then $w$ has one of the forms
 \[
 \begin{array}{ll}
 w=a?b?a?b?\cdots b?a, & w=xa?b?a?b?\cdots a?by,
 \end{array}
 \]
 for some $a,b,x,y \in \{0,1\}$ with $a \neq b$.

 Suppose that $w=a?b?a?b?\cdots b?a$. By Lemma~\ref{odd_len}, the word $w$ has $4n+1$
 runs; the two possibilities for $a\in\{0,1\}$ give two equivalence classes.

 Now suppose that $w=xa?b?a?b?\cdots a?by$ and let $i$ and $j$ be the positions
 of the $x$ and $y$ in some occurrence of $w$ in ${\bf p}$. Note that $j-i=8n$.
 If we write $w=xw'y$, we can apply Lemma~\ref{odd_len} to $w'$ to conclude that $w'$ has $4n$ runs.
 Note that if $x=y$ then $w$ has $4n+1$ runs, but we have already accounted for
 these equivalence classes in the previous case. We therefore consider the case where
 $x \neq y$.

 If $a=0$ then $i \equiv 0 \pmod{4}$. Write $8n=(2n'+1)2^k$.
 To get $xy=01$, choose $i=2^{k-1}$, which gives $j=(4n'+3)2^{k-1}$.
 In this case $w$ has $4n$ runs. 
 To get $xy=10$, choose $i=3\cdot 2^{k-1}$, which gives $j=(4n'+5)2^{k-1}$.
 In this case $w$ has $4n+2$ runs.

 If $a=1$ then $i \equiv 2 \pmod{4}$. We therefore have either $i \equiv j \equiv 2 \pmod{8}$
 or $i \equiv j \equiv 6 \pmod{8}$. Both cases give $x=y$, a contradiction.

 In total then, $w$ has four possible $\sim_{\text{red}}$ equivalence classes and all occur in ${\bf p}$.
\end{proof} 

 For the remaining equivalence classes modulo $8$, we make use of the software {\tt Walnut}~\cite{Shallit2023book} in the proofs of the 
 following lemmas. Note that, while we have been indexing the terms of the paperfolding word starting with $1$, {\tt Walnut} expects all 
 automatic sequences to be indexed starting with $0$. The indices in the {\tt Walnut} formulas used in the proofs below therefore are $1$ 
 less than you would expect from the surrounding analysis.

\begin{lemma}\label{f_3mod8}
 For every non-negative integer $n$, we have that $\rho^{\text{\emph{red}}}_{\text{{\bf f}}}(8n+3) = 6.$
\end{lemma}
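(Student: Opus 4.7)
The plan is to use Walnut, as the authors indicate for the three residues $3,5,7\pmod 8$. The goal is to verify, for all $n\ge 0$, that there are exactly six $\sim_{\text{red}}$-equivalence classes of length-$(8n+3)$ factors of $\mathbf{f}$, which splits into a lower bound (a) asserting the existence of six pairwise $\sim_{\text{red}}$-inequivalent such factors and a matching upper bound (b) forbidding seven.

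The first step is to express $\sim_{\text{red}}$-equivalence in a Walnut-friendly form. Two binary factors agree under $\sim_{\text{red}}$ if and only if they share their first letter and their number of alternations, where an alternation is a position $k$ at which consecutive letters differ. The first-letter clause is the direct query $\mathbf{f}[i]=\mathbf{f}[j]$. For the alternation count, which grows with $n$, I would introduce the cumulative alternation function $A(p):=\left|\{k<p:\mathbf{f}[k]\ne\mathbf{f}[k+1]\}\right|$ and invoke (or construct) the $2$-synchronized automaton for its graph $(p,A(p))$; equality of alternation counts on two intervals of length $8n+3$ then reduces to the first-order identity $A(i+8n+2)-A(i)=A(j+8n+2)-A(j)$. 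The auxiliary $2$-automaticity of the difference sequence $d(k)=\mathbf{f}[k]\oplus\mathbf{f}[k+1]$, which is what is actually being counted, is immediate from that of $\mathbf{f}$.

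With the predicate $\mathrm{equiv}(n,i,j)$ encoding ``the length-$(8n+3)$ factors of $\mathbf{f}$ starting at $i$ and $j$ are $\sim_{\text{red}}$-equivalent'' in hand, I would have Walnut decide the two statements $\forall n\,\exists i_1,\ldots,i_6\,\bigwedge_{p<q}\lnot\mathrm{equiv}(n,i_p,i_q)$ and $\forall n\,\forall i_1,\ldots,i_7\,\bigvee_{p<q}\mathrm{equiv}(n,i_p,i_q)$, whose conjunction gives $\rho_{\mathbf{f}}^{\text{red}}(8n+3)=6$.

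The principal obstacle is producing the synchronized automaton for the counting function $A$, since raw counting of an automatic-sequence feature is not in general expressible in Walnut; the paperfolding word is regular enough for this to be possible, but it is the step requiring genuine care. Once $A$ is available, the six- and seven-variable first-order checks are routine for Walnut and require no further hand analysis.
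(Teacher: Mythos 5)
Your route is genuinely different from the paper's. The paper does almost all of the work by hand via the Toeplitz structure of $\mathbf{f}$: every length-$(8n+3)$ factor has the form $a?b?\cdots a?b$ or $xa?b?\cdots b?ay$, Lemma~\ref{odd_len} pins the run count of the alternating skeleton at $4n+2$ (or $4n+1$ for the inner word in the second form), and a short case analysis on $x,y$ shows every factor has run count in $\{4n+1,4n+2,4n+3\}$, with first letter and run count jointly admitting at most six combinations --- so the upper bound comes for free. Walnut is then invoked only for four trivial existential queries on individual letters of $\mathbf{f}$, certifying that the four classes with $x=y$ occur for every $n$. Your plan instead pushes everything into Walnut through a synchronized automaton for the cumulative alternation count $A$ together with six- and seven-variable first-order sentences; this is a legitimate alternative and, if executed, would prove the lemma (indeed all residue classes at once).

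The gap is precisely the step you flag and then defer: you never establish that $A$ is $2$-synchronized, and without that the predicate $\mathrm{equiv}(n,i,j)$ is not first-order expressible and neither of your two sentences can be handed to Walnut. Summatory functions of $2$-automatic sequences are $2$-regular but not synchronized in general (they can grow like $n\log n$ or oscillate too wildly), so ``the paperfolding word is regular enough'' is an assertion, not an argument. The claim is true here, but proving it requires essentially the same structural input the paper exploits: by the Toeplitz construction the letters of $\mathbf{f}$ at odd positions alternate, so of the two adjacent pairs straddling any even position exactly one is an alternation; hence $A(p)=\lfloor p/2\rfloor+e(p)$ with $e$ bounded, a bounded $2$-regular sequence is $2$-automatic, and the sum of a synchronized function and a bounded automatic function is synchronized. (Alternatively, one can import the run-counting automata from Shallit's paper on runs in paperfolding sequences, cited in the introduction.) With that supplied your argument closes, though the seven-variable universal sentence is computationally far heavier than the paper's four one-letter queries, and the paper's hand analysis already delivers the upper bound with no pigeonhole sentence at all.
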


\begin{proof}
 Let $w$ be a factor of ${\bf p}$ of length $8n+3$. Then $w$ has one of the forms
 \[
 \begin{array}{ll}
 w=a?b?a?b?\cdots a?b, & w=xa?b?a?b?\cdots b?ay,
 \end{array}
 \]
 for some $a,b,x,y \in \{0,1\}$ with $a \neq b$.

 Suppose that $w=a?b?a?b?\cdots a?b$. By Lemma~\ref{odd_len}, the word $w$ has $4n+2$
 runs; the two possibilities for $a\in\{0,1\}$ give two equivalence classes.

 Now, suppose that $w=xa?b?a?b? \cdots b?ay$ and let $i$ be the position of the $x$ in some occurrence of $w$ in ${\bf p}$. Note 
 that if $a=0$, then $i \equiv 0 \pmod{4}$, and if $a=1$, then $i \equiv 2 \pmod{4}$. Also, if $x \neq y$, then $w$ has $4n+2$ 
 runs, and we have already accounted for these equivalence classes above. We therefore consider $x=y$. There are two possibilities for 
 $a$ and two for $x$; if these all occur in ${\bf p}$ we get four additional equivalence classes. We verify this with the following 
 {\tt Walnut} commands, which all return TRUE:

 \begin{verbatim}
eval pf_red_3mod8a "?msd_2 An Er P[4*r+3]=@0 & P[4*r+4]=@0 &
 P[4*r+8*n+5]=@0":
eval pf_red_3mod8b "?msd_2 An Er P[4*r+3]=@1 & P[4*r+4]=@0 &
 P[4*r+8*n+5]=@1":
eval pf_red_3mod8c "?msd_2 An Er P[4*r+1]=@0 & P[4*r+2]=@1 &
 P[4*r+8*n+3]=@0":
eval pf_red_3mod8d "?msd_2 An Er P[4*r+1]=@1 & P[4*r+2]=@1 &
 P[4*r+8*n+3]=@1":
 \end{verbatim}
\end{proof}

\begin{lemma}\label{f_5mod8}
 For every non-negative integer $n$, we have that $\rho^{\text{\emph{red}}}_{\text{{\bf f}}}(8n+5) = 6.$
\end{lemma}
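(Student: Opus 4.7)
The plan is to closely mirror the proof of Lemma~\ref{f_3mod8}, since $8n+5$ is the remaining odd residue class modulo $8$ to treat. A factor $w$ of ${\bf f}$ of length $8n+5$ has one of the two shapes
\[
 w = a?b?a?b?\cdots b?a \quad \text{or} \quad w = xa?b?a?b?\cdots a?by,
\]
for some $a,b \in \{0,1\}$ with $a \neq b$ and some $x,y \in \{0,1\}$, depending on whether $w$ begins at an odd or at an even position of ${\bf f}$; the ?'s mark positions not determined by the initial sweep of the Toeplitz construction.

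For the first shape, which has length $8n+5 = 2(4n+2)+1$, Lemma~\ref{odd_len} immediately yields $4n+3$ runs, so the two choices of $a$ produce two $\sim_{\text{red}}$ equivalence classes.

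For the second shape, the inner subword $a?b?\cdots a?b$ has length $8n+3 = 2(4n+1)+1$, hence $4n+2$ runs by Lemma~\ref{odd_len}, and begins with $a$ and ends with $b$. Prepending $x$ adds a run precisely when $x=b$, and appending $y$ adds a run precisely when $y=a$, so the four $(x,y)$ subcases produce run counts $4n+2,\,4n+3,\,4n+3,\,4n+4$. The cases with $x=y$ therefore duplicate the two equivalence classes of the first shape and contribute nothing new, so we restrict attention to $x \neq y$; combined with the two choices of $a$, this yields up to four additional candidate equivalence classes, which are distinguished from one another and from the first-shape classes by the pair (first letter of $w$, number of runs of $w$), namely first letter $\in \{0,1\}$ and run count $\in \{4n+2, 4n+4\}$.

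Finally, I would verify with {\tt Walnut} that each of these four candidate configurations actually occurs in ${\bf f}$, using queries patterned on those in the proof of Lemma~\ref{f_3mod8}. The position $i$ of $x$ in an occurrence of $w$ satisfies $i \equiv 0 \pmod 4$ when $a = 0$ and $i \equiv 2 \pmod 4$ when $a = 1$; the only substantive change relative to the $8n+3$ case is that the offset from the index of $x$ to the index of $y$ becomes $8n+4$ rather than $8n+2$. The main obstacle, as in Lemma~\ref{f_3mod8}, is merely the careful bookkeeping of indices under {\tt Walnut}'s $0$-indexing convention; once the four queries return TRUE, the total count is $2+4 = 6$, as claimed.
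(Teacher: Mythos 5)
Your proposal is correct and follows essentially the same route as the paper: split into the two Toeplitz shapes, get two classes with $4n+3$ runs from the odd-start shape via Lemma~\ref{odd_len}, observe that the even-start shape duplicates these when $x=y$ and yields four new classes (first letter in $\{0,1\}$, run count in $\{4n+2,4n+4\}$) when $x\neq y$, and confirm their occurrence with {\tt Walnut}. The paper's proof simply lists the four {\tt Walnut} queries explicitly (with the offset $8n+4$ and the opposite-letter condition you describe), all returning TRUE, so your plan matches it in every substantive respect.
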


\begin{proof}
 Let $w$ be a factor of ${\bf p}$ of length $8n+5$. Then $w$ has one of the forms
 \[
 \begin{array}{ll}
 w=a?b?a?b?\cdots b?a, & w=xa?b?a?b?\cdots a?by,
 \end{array}
 \]
 for some $a,b,x,y \in \{0,1\}$ with $a \neq b$.

 Suppose that $w=a?b?a?b?\cdots b?a$. By Lemma~\ref{odd_len}, the word $w$ has $4n+3$
 runs; the two possibilities for $a\in\{0,1\}$ give two equivalence classes.

 Now suppose that $w=xa?b?a?b?\cdots a?by$. In this case we get our four additional
 equivalence classes when $x \neq y$. We verify their existence for all $n$ with
 the following {\tt Walnut} commands, which all return TRUE:

 \begin{verbatim}
eval pf_red_5mod8a "?msd_2 An Er P[4*r+3]=@0 & P[4*r+4]=@0 &
 P[4*r+8*n+7]=@1":
eval pf_red_5mod8b "?msd_2 An Er P[4*r+3]=@1 & P[4*r+4]=@0 &
 P[4*r+8*n+7]=@0":
eval pf_red_5mod8c "?msd_2 An Er P[4*r+1]=@0 & P[4*r+2]=@1 &
 P[4*r+8*n+5]=@1":
eval pf_red_5mod8d "?msd_2 An Er P[4*r+1]=@1 & P[4*r+2]=@1 &
 P[4*r+8*n+5]=@0":
 \end{verbatim}
\end{proof}

\begin{lemma}\label{f_7mod8}
 For every non-negative integer $n$, we have that $\rho^{\text{\emph{red}}}_{\text{{\bf f}}}(8n+7) = 6.$
\end{lemma}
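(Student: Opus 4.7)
The plan is to follow the same template as Lemmas~\ref{f_3mod8} and~\ref{f_5mod8}: split factors of length $8n+7$ into two shape classes based on their alignment with the Toeplitz gap structure, count the $\sim_{\text{red}}$ equivalence classes contributed by each shape, and then use Walnut to certify that every class we expect is actually realized in ${\bf f}$.

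First I would note that, since $8n+7 \equiv 3 \pmod 4$, the alignment argument used in the earlier lemmas forces the two shapes
\[
w = a?b?a?b?\cdots a?b \quad \text{and} \quad w = xa?b?a?b?\cdots b?ay,
\]
for some $a,b \in \{0,1\}$ with $a \neq b$. Lemma~\ref{odd_len} applied to the first shape gives $4n+4$ runs, so the two choices of $a$ contribute two equivalence classes.

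For the second shape, the interior $a?b?\cdots b?a$ has length $8n+5$ and by Lemma~\ref{odd_len} has $4n+3$ runs; moreover, it begins and ends with $a$. Prepending $x$ creates a new run exactly when $x \neq a$, and appending $y$ creates a new run exactly when $y \neq a$, so $w$ has $4n+3$, $4n+4$, or $4n+5$ runs depending on how many of $x,y$ differ from $a$. The case $x \neq y$ produces run count $4n+4$, coinciding with the first shape, so no new equivalence classes arise there. The case $x = y$, by contrast, yields four combinations of (first letter, run count) in $\{0,1\} \times \{4n+3,\, 4n+5\}$, all distinct from the two classes already counted; it remains only to verify that each of these four configurations is actually realized in ${\bf f}$.

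This last verification is where I would defer to Walnut, using the same scheme as in the proofs of Lemmas~\ref{f_3mod8} and~\ref{f_5mod8}: for each of the four combinations of $a \in \{0,1\}$ and $x = y \in \{0,1\}$, write a formula asserting that for every $n$ there exists an offset $r$ such that $P$ at the Walnut positions of $x$, $a$, and $y$ takes the prescribed values, where the residue of $r$ modulo $4$ encodes whether $a = 0$ (with the $a$-position $\equiv 0 \pmod 4$) or $a = 1$ (with the $a$-position $\equiv 2 \pmod 4$). The only real obstacle is keeping the offsets straight: for length $8n+7$ the distance from $x$ to $y$ is $8n+6$ rather than $8n+2$ or $8n+4$ as in the previous cases, so the formulas from Lemmas~\ref{f_3mod8} and~\ref{f_5mod8} need to be reindexed before being run, but otherwise the structure of the verification is identical and yields the claimed total of $2 + 4 = 6$ equivalence classes.
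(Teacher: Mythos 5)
Your proposal is correct and follows essentially the same route as the paper: the same two Toeplitz-alignment shapes, Lemma~\ref{odd_len} giving $4n+4$ runs for the first shape and $4n+3$ for the interior of the second, the observation that $x\neq y$ duplicates existing classes while $x=y$ yields four new (first letter, run count) pairs, and a final {\tt Walnut} check that all four are realized. The only difference is that you describe the four {\tt Walnut} queries (with the correctly reindexed offset $8n+6$ between the endpoints) rather than writing them out, and you spell out the run-count bookkeeping for the second shape a bit more explicitly than the paper does.
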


\begin{proof}
 Let $w$ be a factor of ${\bf p}$ of length $8n+7$. Then $w$ has one of the forms
 \[
 \begin{array}{ll}
 w=a?b?a?b?\cdots a?b, & w=xa?b?a?b?\cdots b?ay,
 \end{array}
 \]
 for some $a,b,x,y \in \{0,1\}$ with $a \neq b$.

 Suppose that $w=a?b?a?b?\cdots a?b$. By Lemma~\ref{odd_len}, the word $w$ has $4n+4$
 runs; the two possibilities for $a\in\{0,1\}$ give two equivalence classes.

 Now suppose that $w=xa?b?a?b?\cdots b?ay$. In this case we get our four additional
 equivalence classes when $x=y$. We verify their existence for all $n$ with
 the following {\tt Walnut} commands, which all return TRUE:

 \begin{verbatim}
eval pf_red_7mod8a "?msd_2 An Er P[4*r+3]=@0 & P[4*r+4]=@0 &
 P[4*r+8*n+9]=@0":
eval pf_red_7mod8b "?msd_2 An Er P[4*r+3]=@1 & P[4*r+4]=@0 &
 P[4*r+8*n+9]=@1":
eval pf_red_7mod8c "?msd_2 An Er P[4*r+1]=@0 & P[4*r+2]=@1 &
 P[4*r+8*n+7]=@0":
eval pf_red_7mod8d "?msd_2 An Er P[4*r+1]=@1 & P[4*r+2]=@1 &
 P[4*r+8*n+7]=@1":
 \end{verbatim}
\end{proof}

This last lemma completes the proof of Theorem~\ref{pf_red}.

\subsection{Reduced abelian complexity functions}
 Relative to our proof of Theorem \ref{pf_red}, a similar approach can be used to evaluate the reduced abelian complexity function for 
 $\text{{\bf f}}$. We later consider the problem of determining a recurrence for the reduced abelian complexity function 
 for $\text{{\bf t}}$. 

 The integer sequence 
\begin{equation}\label{numericalabf}
 \big( \rho_{\text{{\bf f}}}^{\text{ab}}(n) : n \in \mathbb{N} \big) 
 = (2, 3, 4, 3, 4, 5, 4, 3, 4, 5, 6, 5, 4, 5, 4, 3, 4, 5, 6, 5, 6, \ldots) 
\end{equation}
 agrees with the OEIS entry {\tt A214613} and was first shown to be 
 a $2$-regular sequence by Madill and Rampersad~\cite{MadillRampersad2013}. 
 In contrast to \eqref{numericalabf}, we find that 
\begin{equation*}
 \big( \rho_{\text{{\bf f}}}^{\text{ab}, \text{red}}(n) : n \in \mathbb{N} \big) 
 = (2, 3, 5, 3, 4, 3, 5, 3, 4, 3, 5, 3, 4, 3, 5, 3, 4, 3, 5, 3, 4, 3, \ldots) 
\end{equation*}
 is eventually periodic. 
 
\begin{theorem}
 For every positive integer $n$, we have that 
\begin{equation*}
 \rho^{\text{{\emph{ab}}}, \text{{\emph{red}}}}_{\text{{\bf f}}}(n) = \begin{cases} 
 3 & \text{if $n$ is even}, \\ 
 4 & \text{if $n > 1$ and $n \equiv 1 \pmod{4}$}, \\ 
 5 & \text{$n \equiv 3 \pmod{4}$}. 
 \end{cases} 
\end{equation*}
\end{theorem}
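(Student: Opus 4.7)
The plan is to leverage the factor classification performed in the proof of Theorem~\ref{pf_red}. The central observation is that, because the alphabet is $\{0,1\}$, the word $\text{red}(w)$ is alternating and hence determined by its length and its first letter. Its Parikh vector is therefore determined by the number of runs $r$ of $w$ together with the first letter $\varepsilon$ of $w$, with the additional collapse that when $r$ is even both alternating words of length $r$ share the same Parikh vector (so for even $r$ the first letter is irrelevant). Consequently $\rho^{\text{ab},\text{red}}_{\bf f}(n)$ equals the number of pairs $(r,\varepsilon)\in\mathbb{N}\times\{0,1\}$ realized by length-$n$ factors of ${\bf f}$, modulo the identification of $(r,0)$ with $(r,1)$ for every even $r$. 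The theorem therefore reduces to tabulating, for each residue of $n$ modulo $8$, the realized $(r,\varepsilon)$ pairs drawn from the proofs of Lemmas~\ref{f_2n}, \ref{f_1mod8}, \ref{f_3mod8}, \ref{f_5mod8}, and \ref{f_7mod8}, and then applying the abelian collapse.

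I would handle each residue in turn. For $n=2m$ even, Lemma~\ref{f_2n} realizes all four pairs in $\{m,m+1\}\times\{0,1\}$; since exactly one of $m,m+1$ is even, this yields $1+2=3$ classes. For $n=8m+1$ with $m\ge 1$, Lemma~\ref{f_1mod8} realizes $(4m+1,0)$ and $(4m+1,1)$ (from the first form) together with $(4m,0)$ and $(4m+2,1)$ (from the second form); after collapsing at the two even run counts this gives $1+2+1=4$ classes. For $n\equiv 3,5,7\pmod 8$, Lemmas~\ref{f_3mod8}, \ref{f_5mod8}, and \ref{f_7mod8} each realize three consecutive run counts, and a brief bookkeeping of the $(a,x,y)$ combinations arising in their proofs shows that both first letters occur at every odd run count involved, so the tallies become $2+1+2=5$, $1+2+1=4$, and $2+1+2=5$ respectively, matching the formula. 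The tiny case $n=1$ (not covered by the three displayed cases) is handled by direct inspection: the length-$1$ factors $0$ and $1$ yield two classes.

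There is essentially no new technical obstacle: every existence statement I need has already been proved during the reduced-factor analysis, either by elementary position-modulo arguments (Lemmas~\ref{f_2n} and~\ref{f_1mod8}) or by the \texttt{Walnut} verifications (Lemmas~\ref{f_3mod8}, \ref{f_5mod8}, \ref{f_7mod8}). The only point requiring care is the $n\equiv 1\pmod 8$ case, where the two even run counts are each realized by only one first letter rather than both; one must remember that an even run count contributes exactly one abelian class regardless of how many first letters are realized, which makes the apparent asymmetry harmless. With that observation, the theorem becomes a direct corollary of the reduced-factor classification already in hand.
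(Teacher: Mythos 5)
Your proposal is correct and follows essentially the same route as the paper: reduce the abelian-reduced count to the $(\text{run count}, \text{first letter})$ pairs already classified in the proofs of Lemmas~\ref{f_2n}--\ref{f_7mod8}, then collapse the two first-letter choices into one class precisely at even run counts. The tallies you obtain ($3$, $1{+}2{+}1=4$, $2{+}1{+}2=5$, etc.) match the paper's case-by-case bookkeeping exactly, and your explicit handling of $n=1$ is a harmless addition.
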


\begin{proof}
 Suppose two binary words $w$ and $w'$ begin with different letters but have the same number of runs. If $w$ and $w'$ have an even 
 number of runs then $\text{red}(w)$ and $\text{red}(w')$ are abelian equivalent; whereas, if $w$ and $w'$ have an odd number of runs 
 then $\text{red}(w)$ and $\text{red}(w)$ are not abelian equivalent.

 To prove the result we go through the various lemmas covering the different cases of the proof of Theorem~\ref{pf_red} and examine the 
 parity of the number of runs in each $\sim_{\text{red}}$ equivalence class.

If $n=2m$ then the proof of Lemma~\ref{f_2n} gives:
\begin{itemize}
 \item $2$ $\sim_{\text{red}}$ equivalence classes with $m$ runs, and
 \item $2$ $\sim_{\text{red}}$ equivalence classes with $m+1$ runs.
\end{itemize}
One of $m$ and $m+1$ is even and the two corresponding $\sim_{\text{red}}$ equivalence classes
merge under the abelian reduced equivalence. Hence, we have
$\rho^{\text{{\emph{ab}}}, \text{{\emph{red}}}}_{\text{{\bf f}}}(n) = 3$.

If $n=8m+1$, $m>0$, then the proof of Lemma~\ref{f_1mod8} gives:
\begin{itemize}
 \item $1$ $\sim_{\text{red}}$ equivalence class with $4m$ runs,
 \item $2$ $\sim_{\text{red}}$ equivalence classes with $4m+1$ runs, and
 \item $1$ $\sim_{\text{red}}$ equivalence class with $4m+2$ runs.
\end{itemize}
Hence, we have $\rho^{\text{{\emph{ab}}}, \text{{\emph{red}}}}_{\text{{\bf f}}}(n) = 4$.

If $n=8m+3$ then the proof of Lemma~\ref{f_3mod8} gives:
\begin{itemize}
 \item $2$ $\sim_{\text{red}}$ equivalence classes with $4m+1$ runs,
 \item $2$ $\sim_{\text{red}}$ equivalence classes with $4m+2$ runs, and
 \item $2$ $\sim_{\text{red}}$ equivalence classes with $4m+3$ runs.
\end{itemize}
Hence, we have $\rho^{\text{{\emph{ab}}}, \text{{\emph{red}}}}_{\text{{\bf f}}}(n) = 5$.

If $n=8m+5$ then the proof of Lemma~\ref{f_5mod8} gives:
\begin{itemize}
 \item $2$ $\sim_{\text{red}}$ equivalence classes with $4m+2$ runs,
 \item $2$ $\sim_{\text{red}}$ equivalence classes with $4m+3$ runs, and
 \item $2$ $\sim_{\text{red}}$ equivalence classes with $4m+4$ runs.
\end{itemize}
Hence, we have $\rho^{\text{{{ab}}}, \text{{{red}}}}_{\text{{\bf f}}}(n) = 4$.

If $n=8m+7$ then the proof of Lemma~\ref{f_7mod8} gives:
\begin{itemize}
 \item $2$ $\sim_{\text{red}}$ equivalence classes with $4m+3$ runs, and
 \item $2$ $\sim_{\text{red}}$ equivalence classes with $4m+4$ runs, and
 \item $2$ $\sim_{\text{red}}$ equivalence classes with $4m+5$ runs.
\end{itemize}
Hence, we have $\rho^{\text{{{ab}}}, \text{{{red}}}}_{\text{{\bf f}}}(n) = 5$.
\end{proof}

 The problem of determining a recursion for 
\begin{equation}\label{numericalproblem}
 \big( \rho^{\text{ab}, \text{red}}_{\text{{\bf t}}}(n) : n \in \mathbb{N} 
 \big) = (2, 3, 3, 4, 3, 5, 4, 5, 3, 4, 5, 6, 4, 6, 5, 4, 3, 5, 4, \ldots)
\end{equation}
 appears to be much more challenging, relative to the above Theorems. 
 This problem is motivated by 
 past research on the abelian complexity functions for Thue--Morse-like sequences~\cite{BlanchetSadriCurrieRampersadFox2014,ChenWen2019,Greinecker2015,KaboreKientega2017,ParreauRigoRowlandVandomme2015}, 
 and leads us to provide, in the below section, 
 open problems concerning $\rho_{\text{{\bf t}}}^{\text{ab}, \text{red}}$. 

\section{Conclusion}\label{sectionConclusion}
 Although it appears that 
 $$ \rho^{\text{{ab}}, \text{{red}}}_{\text{{\bf t}}}(2n + 1) = 
 \rho^{\text{{ab}}, \text{{red}}}_{\text{{\bf t}}}\left( n + 1 \right) $$ 
 for nonnegative integers $n$, 
 the problem of determining a full recursion for 
 $ \rho^{\text{\emph{ab}}, \text{\emph{red}}}_{\text{{\bf t}}}(n) $ 
 seems to be challenging. 
 It appears that 
\begin{equation}\label{displayconjecture}
 \left| \rho^{\text{{ab}}, \text{{red}}}_{\text{{\bf t}}}(4n+2) 
 - \rho^{\text{{ab}}, \text{{red}}}_{\text{{\bf t}}}(4n) \right| = \begin{cases} 
 0 & \text{if $ \text{{\bf t}}_{n+1} = \text{{\bf t}}_{3n+1}$}, \\ 
 1 & \text{otherwise}, 
 \end{cases} 
\end{equation}
 but it is unclear how the sign of 
 $ \rho^{\text{{ab}}, \text{{red}}}_{\text{{\bf t}}}(4n+2) 
 - \rho^{\text{{ab}}, \text{{red}}}_{\text{{\bf t}}}(4n) $ 
 could be evaluated in an explicit way for the nonzero case, and we leave this as an open problem. 
 Also, it is unclear as to how a suitable recursion could be determined for 
 $\rho_{\text{{\bf t}}}^{\text{ab}, \text{red}}(4n)$, and we leave this as an open problem. 
 We also leave it as an open problem to prove \eqref{displayconjecture}. 
 It also appears that the integer sequence in \eqref{numericalproblem} is not $k$-automatic, and we leave it as an open 
 problem to prove this. 

\subsection*{Acknowledgements}
 The first author was supported by an NSERC Discovery Grant 
 and thanks Jean-Paul Allouche for useful feedback.  The
 second and third authors also acknowledge the support of the
 NSERC Discovery Grant program.

 \ 

{\textsc{John M. Campbell}} 

\vspace{0.1in}

Department of Mathematics and Statistics 

Dalhousie University

6283 Alumni Crescent, Halifax, NS B3H 4R2

\vspace{0.1in}

{\tt jh241966@dal.ca}

 \ 

{\textsc{James Currie}} 

\vspace{0.1in}

{\textsc{Narad Rampersad}} 

\vspace{0.1in}

 Department of Mathematics and Statistics 

 University of Winnipeg

 515 Portage Ave, Winnipeg, MB R3B 2E9

\vspace{0.1in}

{\tt j.currie@uwinnipeg.ca}

\vspace{0.1in}

{\tt n.rampersad@uwinnipeg.ca}

\end{document}